



\documentclass{amsart}




\usepackage{setspace}
\usepackage{graphicx}
\usepackage{amscd,amssymb,amsthm}

\usepackage{hyperref}
\usepackage{graphicx}
\usepackage{amsmath, amsthm, amscd, amsfonts, amssymb, graphicx, color}
 \usepackage{url}
\usepackage{enumerate}
 \makeatletter
\let\reftagform@=\tagform@
\def\tagform@#1{\maketag@@@{(\ignorespaces\textcolor{blue}{#1}\unskip\@@italiccorr)}}
\renewcommand{\eqref}[1]{\textup{\reftagform@{\ref{#1}}}}
\makeatother

\hypersetup{colorlinks=true, linkcolor=red, anchorcolor=green,
citecolor=cyan, urlcolor=red, filecolor=magenta, pdftoolbar=true}

\usepackage{epsfig}        
\usepackage{epic,eepic}       

\setcounter{MaxMatrixCols}{10}

\newtheorem{theorem}{Theorem}
\theoremstyle{plain}

\newtheorem{corollary}{Corollary}

\newtheorem{remark}{Remark}

\numberwithin{equation}{section}

\begin{document}

\title[ An Application of Hayashi's Inequality]{An Application of
Hayashi's Inequality for Differentiable Functions}
\author[ Alomari \& Bakula]{Mohammad W. Alomari$^1$ and Milica K. Bakula$^{2,*}$}

\thanks{$^{2,*}$Corresponding author}

\address{$^1$Department of Mathematics, Faculty of Science and
	Information Technology, Irbid National University, P.O. Box 2600,
	Irbid, P.C. 21110, Jordan.} \email{$^1$mwomath@gmail.com}

\address{$^{2,*}$Faculty of Science, University of Split, Split, Croatia}
\email{$^{2,*}$milica@pmfst.hr}

\keywords{Hayashi's Inequality, Ostrowski's inequality,
Differentiable functions} \subjclass[2010]{26D15}

\begin{abstract}
In this work, we offer new applications of Hayashi's inequality for differentiable functions by proving new error estimates of the Ostrowski and trapezoid type quadrature rules.
\end{abstract}

\maketitle

\section{Introduction\label{sec1}}
 
The Hayashi's inequality  states that (\cite{MPF}, pp. 311-312):
\begin{theorem}
Let $p:[a,b] \to \mathbb{R}$  be a nonincreasing mapping on $[a,
b]$ and $h : [a, b] \to \mathbb{R}$ an integrable mapping on $[a,
b]$ with $0 \le h(x) \le A$ for all $x\in [a,b]$. Then, the
inequality
\begin{align}
A \int_{b-\lambda}^b{p(x)dx} \le  \int_{a}^b{p(x)h(x)dx} \le A
\int_a^{a+\lambda}{p(x)dx} \label{eq1.1}
\end{align}
holds, where $\lambda = \frac{1}{A}\int_{a}^b{h(x)dx}$.
\end{theorem}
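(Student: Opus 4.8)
The plan is to reduce the statement to a rearrangement comparison. First I would normalise by setting $g(x) = h(x)/A$, so that $0 \le g(x) \le 1$ on $[a,b]$ and $\int_a^b g(x)\,dx = \lambda$. Since $h(x) \le A$ we get $\lambda \le b-a$, and since $h \ge 0$ we get $\lambda \ge 0$; hence both $[a,a+\lambda]$ and $[b-\lambda,b]$ are genuine subintervals of $[a,b]$ and the two outer integrals are well defined. In this language the claim becomes
\[
\int_{b-\lambda}^b p(x)\,dx \;\le\; \int_a^b p(x)g(x)\,dx \;\le\; \int_a^{a+\lambda} p(x)\,dx,
\]
that is: among all weights taking values in $[0,1]$ with fixed total mass $\lambda$, the weighted integral of a nonincreasing $p$ is largest when the mass sits as far left as possible and smallest when it sits as far right as possible.

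For the upper bound I would introduce the indicator $\chi = \mathbf{1}_{[a,a+\lambda]}$, which also satisfies $\int_a^b \chi\,dx = \lambda$, and study the difference
\[
\int_a^{a+\lambda} p(x)\,dx - \int_a^b p(x)g(x)\,dx = \int_a^b p(x)\bigl(\chi(x) - g(x)\bigr)\,dx.
\]
The key observation is that $\chi - g$ has a definite sign pattern with a single sign change at $a+\lambda$: on $[a,a+\lambda]$ we have $\chi = 1 \ge g$, so $\chi - g \ge 0$, while on $(a+\lambda,b]$ we have $\chi = 0 \le g$, so $\chi - g \le 0$; moreover $\int_a^b(\chi - g)\,dx = 0$. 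Using that $p$ is nonincreasing, so $p(x) \ge p(a+\lambda)$ to the left of $a+\lambda$ and $p(x) \le p(a+\lambda)$ to the right, I can pivot the integrand about the constant $p(a+\lambda)$ on each piece (bounding a nonnegative excess from below by a larger factor, and a nonpositive excess from below by a smaller factor), obtaining
\[
\int_a^b p(x)\bigl(\chi(x)-g(x)\bigr)\,dx \ge p(a+\lambda)\int_a^b\bigl(\chi(x)-g(x)\bigr)\,dx = 0,
\]
which is the right-hand inequality.

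The lower bound is entirely symmetric: I would compare $g$ against $\psi = \mathbf{1}_{[b-\lambda,b]}$, note that $g - \psi \ge 0$ on $[a,b-\lambda)$ and $g - \psi \le 0$ on $[b-\lambda,b]$ with $\int_a^b(g-\psi)\,dx = 0$, and again use monotonicity of $p$ to pivot about the single value $p(b-\lambda)$, giving $\int_a^b p(x)\bigl(g(x)-\psi(x)\bigr)\,dx \ge p(b-\lambda)\int_a^b\bigl(g(x)-\psi(x)\bigr)\,dx = 0$.

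The main subtlety to get right is the sign-pattern step: one must verify that the excess weight $\chi - g$ (respectively $g - \psi$) changes sign exactly once, precisely at the splitting point, so that monotonicity of $p$ can be used to replace $p(x)$ by its value at that point in the correct direction on \emph{both} pieces simultaneously. Everything else—the normalisation, the bound $\lambda \le b-a$, and the mirrored treatment of the two inequalities—is routine. I would also keep in mind that $p$ is assumed only monotone (hence integrable) and not continuous, so all estimates are pointwise inequalities that are then integrated, requiring no regularity of $p$ beyond what is given.
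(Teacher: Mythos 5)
The paper offers no internal proof of this theorem to compare against: it is stated in the introduction as a known result, cited to Mitrinovi\'{c}--Pe\v{c}ari\'{c}--Fink \cite{MPF}, pp.~311--312, and is only \emph{applied} later. Judged on its own, your proof is correct and complete, and it is in fact the classical argument for Steffensen/Hayashi-type inequalities. The normalisation $g=h/A$ (with the implicit but necessary assumption $A>0$, already forced by the definition of $\lambda$), the observation $0\le\lambda\le b-a$, the comparison of $g$ against the indicators $\mathbf{1}_{[a,a+\lambda]}$ and $\mathbf{1}_{[b-\lambda,b]}$, and the pivot about $p(a+\lambda)$ (resp.\ $p(b-\lambda)$) all fit together exactly as you describe: the pointwise inequality $p(x)\bigl(\chi(x)-g(x)\bigr)\ge p(a+\lambda)\bigl(\chi(x)-g(x)\bigr)$ holds on both pieces (on the left because the excess is nonnegative and $p(x)\ge p(a+\lambda)$, on the right because the excess is nonpositive and $p(x)\le p(a+\lambda)$), and integrating it against $\int_a^b\bigl(\chi(x)-g(x)\bigr)dx=0$ yields the upper bound, with the mirror argument giving the lower bound. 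One phrase deserves a caveat: $\chi-g$ need not change sign \emph{exactly} once (it can vanish on sets of positive measure, e.g.\ wherever $g=1$ inside $[a,a+\lambda]$), but your argument never uses uniqueness of the sign change --- only the one-sided signs on the two pieces together with zero total mass --- so the proof stands as written.
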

This inequality is a simple generalization of Steffensen's
inequality which holds with same assumptions with $A=1$. For recent results concerning the Hayashi's inequality see \cite{alomari4}.

In 1996, Agarwal and Dragomir \cite{Agarwal} presented an
application of this inequality, as follows:
\begin{theorem}
Let $f : I \subseteq \mathbb{R} \to \mathbb{R} $  be a
differentiable mapping on $I^{\circ}$ (the interior of $I$) and
$[a, b] \subseteq I^{\circ}$  with $M = \mathop {\sup }\limits_{x
\in \left[ {a,b} \right]} f^{\prime}\left( x \right) < \infty$, $m
= \mathop {\inf }\limits_{x \in \left[ {a,b} \right]}
f^{\prime}\left( x \right) < \infty$ and $M > m$. If $f^{\prime}$
is integrable on $[a, b]$, then the following inequality holds
\begin{align}
&\left|{\frac{1}{b-a}\int_a^b{f\left(t\right)dt}-\frac{
f\left(a\right)+ f\left(b\right)}{2} }\right|\label{eq1.2}
\\
&\le \frac{{\left[ {f\left( b \right) - f\left( a \right) -
m\left( {b - a} \right)} \right]\left[ {M\left( {b - a} \right) -
f\left( b \right) + f\left( a \right)} \right]}}{{2\left( {M - m}
\right)\left( {m - a} \right)}}
\nonumber\\
&\le \frac{\left( {M - m} \right)\left( {b - a}
\right)}{8}.\nonumber
\end{align}
\end{theorem}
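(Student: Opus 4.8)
The plan is to rewrite the trapezoid functional as a single integral against $f'$ and then feed it into Hayashi's inequality \eqref{eq1.1}. First I would establish, by integration by parts, the identity
\[
\frac{1}{b-a}\int_a^b f(t)\,dt-\frac{f(a)+f(b)}{2}=\frac{1}{b-a}\int_a^b\left(\frac{a+b}{2}-t\right)f'(t)\,dt,
\]
which follows from $\int_a^b\left(t-\frac{a+b}{2}\right)f'(t)\,dt=\left[\left(t-\frac{a+b}{2}\right)f(t)\right]_a^b-\int_a^b f(t)\,dt$ together with $t-\frac{a+b}{2}=\pm\frac{b-a}{2}$ at $t=b,a$.

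Next I would choose $p(t)=\frac{a+b}{2}-t$ and $h(t)=f'(t)-m$. Then $p$ is nonincreasing on $[a,b]$, while $0\le h(t)\le M-m=:A$ since $m=\inf f'$, $M=\sup f'$, and $h$ is integrable. The decisive observation is $\int_a^b p(t)\,dt=0$, so
\[
\int_a^b p(t)h(t)\,dt=\int_a^b p(t)f'(t)\,dt=(b-a)\left(\frac{1}{b-a}\int_a^b f-\frac{f(a)+f(b)}{2}\right),
\]
which is precisely the integral appearing in \eqref{eq1.1}, with $\lambda=\frac{1}{A}\int_a^b h=\frac{f(b)-f(a)-m(b-a)}{M-m}$ (and one checks $0\le\lambda\le b-a$).

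Applying \eqref{eq1.1} then bounds this integral above and below. A short computation gives the endpoint integrals $\int_a^{a+\lambda}p=\frac{\lambda}{2}(b-a-\lambda)$ and $\int_{b-\lambda}^{b}p=-\frac{\lambda}{2}(b-a-\lambda)$, so the two Hayashi bounds are exact negatives of each other. Substituting $\lambda$ and $b-a-\lambda=\frac{M(b-a)-f(b)+f(a)}{M-m}$ and dividing by $b-a$ produces
\[
\left|\frac{1}{b-a}\int_a^b f-\frac{f(a)+f(b)}{2}\right|\le\frac{[f(b)-f(a)-m(b-a)][M(b-a)-f(b)+f(a)]}{2(M-m)(b-a)},
\]
i.e.\ the first asserted inequality (so the factor $(m-a)$ in the denominator of \eqref{eq1.2} should read $(b-a)$). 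For the second inequality I would set $x=f(b)-f(a)-m(b-a)\ge 0$ and $y=M(b-a)-f(b)+f(a)\ge 0$, note $x+y=(M-m)(b-a)$, and invoke $xy\le\frac{(x+y)^2}{4}$.

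The main obstacle is not any single hard estimate but getting the reduction exactly right: one must verify that $p$ has the correct monotonicity, that $\int_a^b p=0$ so that subtracting $m$ from $f'$ is free, and—most importantly—that the upper and lower endpoint integrals are negatives, since it is this symmetry that upgrades the one-sided Hayashi estimate into the two-sided absolute-value bound. Mishandling this symmetry (or the orientation of $p$) is the most likely source of the error the paper sets out to correct.
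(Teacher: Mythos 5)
Your proposal is correct and is essentially the paper's own argument (Theorem \ref{thm3} and Corollary \ref{cor1}) specialized at $x=\frac{a+b}{2}$: the paper likewise applies Hayashi's inequality with a nonincreasing linear weight $p(t)=x-t$ and the shifted derivative $h=g'-\gamma$, exploits the symmetry of the two endpoint integrals (via $\left|I-\frac{\ell_1(x)+\ell_2(x)}{2}\right|\le\frac{\ell_2(x)-\ell_1(x)}{2}$, whose midpoint term vanishes at $x=\frac{a+b}{2}$) to obtain the two-sided bound, and finishes with the same AM--GM step. Your observation that the denominator $(m-a)$ in \eqref{eq1.2} must read $(b-a)$ is also consistent with what the paper's Corollary \ref{cor1} actually yields.
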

This elegant inequality presents an error estimation for the Trapezoid rule.
 
In 2002, Gauchman \cite{Gauchman} generalized \eqref{eq1.2} for
$n$-times differentiable using Taylor expansion. So that
\eqref{eq1.2} become a special case of Gauchman result when $n=0$.

In this work, a generalization of  \eqref{eq1.2} is obtained.   In the same argument, other two inequalities of Ostrowski's and Trapezoid type  are also introduced.

\section{The Results\label{sec2}}
Let us begin with the following generalization of \eqref{eq1.2}.
\begin{theorem}\label{thm3}
Let $g:[a,b] \to \mathbb{R}$ be an absolutely continuous function
on $[a,b]$ with $0\le g^{\prime}\left(t\right) \le
\left(b-a\right)$ and $(\cdot - t) g^{\prime}$ is integrable on
$[a,b]$. Then
\begin{multline}
\left|{\frac{1}{b-a}\int_a^b{g\left(t\right)dt}-
\frac{\left(x-a\right)g\left(a\right)+\left(b-x\right)g\left(b\right)}{b-a}-
\lambda \left({x-\frac{a+b }{2} }\right)}\right|
\\
\le
 \frac{\lambda }{2} \left({b-a-\lambda}\right)\le
 \frac{\left({b-a}\right)^2}{8}  \label{eq2.1}
\end{multline}
for all $x\in [a,b]$. The equality satisfied when
$g\left(x\right)=x$, $x\in [0,1]$. In particular, for
$x=\frac{a+b}{2}$
\begin{align*}
\left|{\frac{1}{b-a}\int_a^b{g\left(t\right)dt} -\frac{
g\left(a\right)+ g\left(b\right)}{2}}\right| \le \frac{\lambda
}{2} \left({b-a-\lambda}\right)\le
 \frac{\left({b-a}\right)^2}{8},
\end{align*}
where  $\lambda=\frac{g\left(b\right)-g\left(a\right)}{b-a}$.
\end{theorem}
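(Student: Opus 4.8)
The plan is to turn the (apparently $x$-dependent) left-hand side of \eqref{eq2.1} into a single integral of $g'$ against a monotone kernel and then apply Hayashi's inequality \eqref{eq1.1} verbatim.

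First I would record the integration-by-parts identity coming from the absolute continuity of $g$ (the hypothesis that $(\cdot-t)g'$ is integrable guarantees the integral below exists):
\[
\int_a^b\left(\frac{a+b}{2}-t\right)g'(t)\,dt=\int_a^b g(t)\,dt-\frac{b-a}{2}\bigl(g(a)+g(b)\bigr),
\]
so that, after dividing by $b-a$,
\[
\frac{1}{b-a}\int_a^b g(t)\,dt-\frac{g(a)+g(b)}{2}=\frac{1}{b-a}\int_a^b\left(\frac{a+b}{2}-t\right)g'(t)\,dt .
\]
Next, writing $\lambda=\frac{g(b)-g(a)}{b-a}$, I would observe that the two remaining terms on the left of \eqref{eq2.1} collapse: the coefficients of $g(a)$ and of $g(b)$ in
\[
\frac{(x-a)g(a)+(b-x)g(b)}{b-a}+\lambda\left(x-\frac{a+b}{2}\right)
\]
each reduce to $\tfrac{b-a}{2}$, so this sum equals $\frac{g(a)+g(b)}{2}$ for every $x$. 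Consequently the whole quantity inside the absolute value in \eqref{eq2.1} is independent of $x$ and equals the midpoint expression displayed above; this is the conceptual step that explains why the bound does not actually depend on $x$.

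With this reduction in hand, I would apply \eqref{eq1.1} with the \emph{nonincreasing} kernel $p(t)=\frac{a+b}{2}-t$, the map $h=g'$, and $A=b-a$. The hypothesis $0\le g'(t)\le b-a$ gives $0\le h\le A$, and the Hayashi parameter is exactly $\frac{1}{A}\int_a^b g'(t)\,dt=\frac{g(b)-g(a)}{b-a}=\lambda$, which lies in $[0,b-a]$. Substituting this $p$ into \eqref{eq1.1} and evaluating the two elementary integrals,
\[
\int_a^{a+\lambda}\left(\frac{a+b}{2}-t\right)dt=\frac{\lambda}{2}(b-a-\lambda),\qquad \int_{b-\lambda}^{b}\left(\frac{a+b}{2}-t\right)dt=-\frac{\lambda}{2}(b-a-\lambda),
\]
I obtain, after dividing the resulting two-sided estimate by $b-a$, exactly
\[
-\frac{\lambda}{2}(b-a-\lambda)\le\frac{1}{b-a}\int_a^b g(t)\,dt-\frac{g(a)+g(b)}{2}\le\frac{\lambda}{2}(b-a-\lambda),
\]
which is the first inequality of \eqref{eq2.1}.

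Finally, the second inequality $\frac{\lambda}{2}(b-a-\lambda)\le\frac{(b-a)^2}{8}$ is just the AM-GM inequality for the nonnegative numbers $\lambda$ and $b-a-\lambda$, whose product is largest when $\lambda=\frac{b-a}{2}$. I expect the one place needing genuine care to be the choice of the \emph{decreasing} kernel $p(t)=\frac{a+b}{2}-t$ together with the correct reading of the direction of the two-sided estimate \eqref{eq1.1}, so that the upper and lower constants come out symmetric; everything else is routine integration by parts and elementary integration.
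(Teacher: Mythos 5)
Your proof is correct, and it rests on the same two ingredients as the paper's (integration by parts to convert the trapezoid defect into $\int_a^b p(t)g'(t)\,dt$, then Hayashi's inequality with $A=b-a$), but it is organized around a genuinely different reduction. The paper keeps $x$ general throughout: it applies Hayashi to the $x$-dependent kernel $p(t)=x-t$, obtains $x$-dependent bounds $\ell_1(x)\le I\le \ell_2(x)$, and then extracts \eqref{eq2.1} from the centering trick $\left|I-\frac{\ell_1(x)+\ell_2(x)}{2}\right|\le\frac{\ell_2(x)-\ell_1(x)}{2}$, where $\frac{\ell_1(x)+\ell_2(x)}{2}=\lambda\left(x-\frac{a+b}{2}\right)$ and $\frac{\ell_2(x)-\ell_1(x)}{2}=\frac{\lambda}{2}\left(b-a-\lambda\right)$. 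You instead prove up front, by elementary algebra, that the quantity inside the absolute value in \eqref{eq2.1} does not depend on $x$ at all and equals $\frac{1}{b-a}\int_a^b g(t)\,dt-\frac{g(a)+g(b)}{2}$, and only then apply Hayashi, once, to the fixed kernel $p(t)=\frac{a+b}{2}-t$; since the two extreme integrals evaluate to $\pm\frac{\lambda}{2}\left(b-a-\lambda\right)$, the resulting two-sided estimate is already symmetric and no centering is needed. Your route makes explicit something the paper's proof leaves hidden, namely that the general-$x$ statement of Theorem \ref{thm3} carries exactly the same content as its $x=\frac{a+b}{2}$ case; the paper's route, on the other hand, illustrates the centering device that it reuses for the genuinely $x$-dependent estimate of Theorem \ref{thm4}. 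Your verification that $0\le\lambda\le b-a$ (so that the right-hand side of \eqref{eq2.1} is nonnegative and the two-sided bound really yields an absolute-value bound) is a point the paper glosses over. One small slip in your write-up: the coefficients of $g(a)$ and $g(b)$ in the expression you collapse are each $\frac{1}{2}$, not $\frac{b-a}{2}$ --- it is the numerators, before dividing by $b-a$, that equal $\frac{b-a}{2}$ --- but the conclusion $\frac{g(a)+g(b)}{2}$ is correct, so this does not affect the argument.
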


\begin{proof}
Let $f\left(t\right)=x-t$, $t\in [a,b]$. Applying the Hayashi's
inequality \eqref{eq1.1} by setting
$p\left(t\right)=f\left(t\right)$ and
$h\left(t\right)=g^{\prime}\left(t\right)$, we get
\begin{align}
&\left(b-a\right) \int_{b-\lambda}^b{(x-t)dt} \le
\int_a^b{(x-t)g^{\prime}\left(t\right)dt} \le \left(b-a\right)
\int_a^{a+\lambda}{(x-t)dt}\label{eq2.2}
\end{align}
where, $A= b-a$  or we write
\begin{align*}
\lambda = \frac{1}{b-a} \int_a^b{g^{\prime}\left(t\right)dt} =
\frac{g\left(b\right)-g\left(a\right)}{b-a}.
\end{align*}
Also, we have
\begin{align*}
 \int_{b-\lambda}^b{(x-t)dt}
=\lambda\left(x-b\right)+\frac{1}{2} \lambda^2,
\end{align*}
\begin{align*}
\int_a^b{(x-t)g^{\prime}(t)dt}=-\left(x-a\right)g\left(a\right)-\left(b-x\right)g\left(b\right)+\int_a^b{g\left(t\right)dt},
\end{align*}
and
\begin{align*}
\int_a^{a+\lambda}{(x-t)dt}=  \lambda \left(x-a\right)
-\frac{1}{2}\lambda^2.
\end{align*}
Substituting the above equalities in \eqref{eq2.2} and dividing by
$(b-a)$, we get
\begin{align*}
&\ell_1(x):=\lambda\left(x-b\right) +\frac{1}{2} \lambda^2
\\
&\le
-\frac{\left(x-a\right)g\left(a\right)+\left(b-x\right)g\left(b\right)}{b-a}+\frac{1}{b-a}\int_a^b{g\left(t\right)dt}
:=I
\\
&\le\lambda \left(x-a\right) -\frac{1}{2}   \lambda^2 :=\ell_2(x).
\end{align*}
We also have
\begin{align*}
\left|{I-\frac{\ell_1(x)+\ell_2(x)}{2} }\right| =\left|{I- \lambda
\left({x-\frac{a+b }{2} }\right) }\right| &\le
\frac{\ell_2(x)-\ell_1(x)}{2}
\\
&= \frac{\lambda }{2} \left({b-a-\lambda}\right)
\end{align*}
which proves the first inequality in \eqref{eq2.1}. The second
inequality follows by applying the same technique in
(\cite{Agarwal}, pp. 96-97).
\end{proof}

\begin{remark}
For very close related results of Theorem \ref{thm3}, we refer the
reader to \cite{alomari1}--\cite{Guessab} and \cite{Ujevic}.
\end{remark}

The corrected generalized version of Agarwal-Dragomir result
\eqref{eq1.2} is incorporated in the following corollary.
\begin{corollary}\label{cor1}
Let $g:[a,b] \to \mathbb{R}$ be an absolutely continuous function
on $[a,b]$ with $\gamma\le g^{\prime}\left(t\right) \le \Gamma$
and $(\cdot - t) g^{\prime}$ is integrable on $[a,b]$. Then
\begin{multline}
\left|{\frac{1}{b-a}\int_a^b{g\left(t\right)dt}
-\frac{\left(x-a\right)g\left(a\right)+\left(b-x\right)g\left(b\right)}{b-a}
-\frac{g\left( {b} \right)-g\left( {a} \right)}{b-a}\cdot \left(
{x - \frac{{a + b}}{2}} \right)}\right|
\\
\le
 \frac{\Gamma -\gamma}{2} \cdot \lambda\cdot\frac{\left({b-a-\lambda}\right)}{b-a}
 \le  \frac{\left({\Gamma -\gamma}\right)\left({b-a}\right)}{8},\label{eq2.3}
\end{multline}
for all $x\in [a,b]$. In particular, for $x=\frac{a+b}{2}$
\begin{align*}
\left|{\frac{1}{b-a}\int_a^b{g\left(t\right)dt} -\frac{
g\left(a\right)+ g\left(b\right)}{2}}\right| \le \frac{\Gamma
-\gamma}{2} \cdot
\lambda\cdot\frac{\left({b-a-\lambda}\right)}{b-a}\le
\frac{\left({\Gamma -\gamma}\right)\left({b-a}\right)}{8},
\end{align*}
where $\lambda  = \frac{{g\left( b \right) - g\left( a \right) -
\gamma \left({b-a}\right)}}{{\Gamma  - \gamma }}$.
\end{corollary}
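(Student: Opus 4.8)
The plan is to reduce the corollary to Theorem~\ref{thm3} by a linear normalization of the derivative. Since $\gamma \le g'(t) \le \Gamma$, the rescaled function
\[
G(t) := \frac{b-a}{\Gamma-\gamma}\bigl(g(t)-\gamma t\bigr)
\]
is absolutely continuous with $G'(t) = \frac{b-a}{\Gamma-\gamma}\bigl(g'(t)-\gamma\bigr)$, so that $0 \le G'(t) \le b-a$ and $(\cdot - t)G'$ remains integrable. Thus $G$ satisfies exactly the hypotheses of Theorem~\ref{thm3}, and the whole argument reduces to applying that theorem to $G$ and carefully translating the conclusion back to $g$.

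First I would apply Theorem~\ref{thm3} to $G$. Its parameter is $\lambda_G = \frac{G(b)-G(a)}{b-a}$, and a direct computation gives $G(b)-G(a) = \frac{b-a}{\Gamma-\gamma}\bigl(g(b)-g(a)-\gamma(b-a)\bigr)$, hence
\[
\lambda_G = \frac{g(b)-g(a)-\gamma(b-a)}{\Gamma-\gamma} = \lambda,
\]
matching the $\lambda$ in the statement. Theorem~\ref{thm3} then yields $|E_G| \le \frac{\lambda}{2}(b-a-\lambda)$, where $E_G$ denotes the quantity inside the absolute value in \eqref{eq2.1} with $g$ replaced by $G$ (and $\lambda$ replaced by $\lambda_G$).

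The key step is to rewrite $E_G$ in terms of $g$. I would introduce the functional
\[
T[g](x) := \frac{1}{b-a}\int_a^b g - \frac{(x-a)g(a)+(b-x)g(b)}{b-a} - \frac{g(b)-g(a)}{b-a}\Bigl(x-\tfrac{a+b}{2}\Bigr),
\]
which is linear in $g$ and satisfies $E_G = T[G]$. Writing $G = \frac{b-a}{\Gamma-\gamma}\bigl(g - \gamma e\bigr)$ with $e(t)=t$ and using linearity gives $E_G = \frac{b-a}{\Gamma-\gamma}\bigl(T[g] - \gamma\,T[e]\bigr)$. A short calculation shows $T[e]=0$ (the functional annihilates affine functions, since then integral, endpoint average, and slope term cancel), so that $E_G = \frac{b-a}{\Gamma-\gamma}\,T[g]$, where $T[g]$ is precisely the expression inside the absolute value in \eqref{eq2.3}.

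Combining the two facts gives $\frac{b-a}{\Gamma-\gamma}\,|T[g]| \le \frac{\lambda}{2}(b-a-\lambda)$, that is, $|T[g]| \le \frac{\Gamma-\gamma}{2}\cdot\lambda\cdot\frac{b-a-\lambda}{b-a}$, which is the first inequality of \eqref{eq2.3}. The second inequality follows by maximizing $\lambda(b-a-\lambda)$, whose maximum $\frac{(b-a)^2}{4}$ is attained at $\lambda = \frac{b-a}{2}$, yielding $\frac{(\Gamma-\gamma)(b-a)}{8}$; the particular case is obtained by setting $x=\frac{a+b}{2}$. I expect the only real obstacle to be the bookkeeping: verifying $T[e]=0$ and reconciling the rescaled parameter $\lambda_G$ with the stated $\lambda$, after which no new idea beyond the normalization is required.
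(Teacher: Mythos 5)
Your proof is correct, but it reaches \eqref{eq2.3} by a different reduction than the paper's. The paper does not invoke Theorem \ref{thm3} as a black box: it \emph{re-runs} the Hayashi argument, applying \eqref{eq1.1} with $p(t)=x-t$ and the shifted function $h(t)=g^{\prime}(t)-\gamma$ (so $0\le h\le \Gamma-\gamma$, i.e.\ $A=\Gamma-\gamma$, giving the stated $\lambda$); the extra term $-\gamma\int_a^b{(x-t)dt}=-\gamma(b-a)\left(x-\frac{a+b}{2}\right)$ then recombines with $\frac{(\Gamma-\gamma)\lambda}{b-a}\left(x-\frac{a+b}{2}\right)$ via the identity $\gamma+\frac{(\Gamma-\gamma)\lambda}{b-a}=\frac{g(b)-g(a)}{b-a}$ to produce the slope term in \eqref{eq2.3}. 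You instead rescale to $G=\frac{b-a}{\Gamma-\gamma}(g-\gamma e)$ so that the hypothesis $0\le G^{\prime}\le b-a$ of Theorem \ref{thm3} holds verbatim, check $\lambda_G=\lambda$, and exploit linearity of the trapezoid-type functional $T$ together with the annihilation property $T[e]=0$ to translate the bound back to $g$. The two arguments rest on the same normalization $g^{\prime}\mapsto g^{\prime}-\gamma$, but yours buys modularity: nothing inside Hayashi's inequality is recomputed, and the bookkeeping that the paper compresses into ``repeating the proof of Theorem \ref{thm3}'' is made explicit and checkable ($\lambda_G=\lambda$, $T[e]=0$). The paper's route, by contrast, avoids the auxiliary scaling factor and the affine-annihilation verification. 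Both proofs finish the second inequality the same way, by maximizing $\lambda(b-a-\lambda)\le\frac{(b-a)^2}{4}$, and both obtain the particular case by setting $x=\frac{a+b}{2}$.
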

\begin{proof}
Repeating the proof of Theorem \ref{thm3}, with
$h(t)=g^{\prime}(t)-\gamma$, $t\in [a,b]$, we get  the first
inequality. The second inequality in \eqref{eq2.3} follows by
applying the same technique in (\cite{Agarwal}, pp. 96-97). Some
manipulations in  the particular $x=\frac{a+b}{2}$ gives the same
result in \eqref{eq1.2}.
\end{proof}

\begin{remark}
Let the assumptions of Corollary \ref{cor1} be satisfied. Then,
the Corollaries 3-4 and the Remarks 1-2 in \cite{Agarwal}  (p. 97)
are hold.
\end{remark}

In 1997, Dragomir and Wang \cite{Dragomir} introduced an
inequality of Ostrowski--Gr\"{u}ss' type as follows:
\begin{align}
\label{eq2.4}\left| {f\left( x \right) - \frac{1}{{b - a}}\int_a^b
{f\left( t \right)dt}  - \frac{{f\left( b \right) - f\left( a
\right)}}{{b - a}}\left( {x - \frac{{a + b}}{2}} \right)} \right|
\le \frac{1}{4}\left( {b - a} \right)\left( {\Gamma  - \gamma }
\right),
\end{align}
holds for all $x \in [a,b]$, where  $f$ is assumed to be
 differentiable
 $[a,b$ with  $f^{\prime }\in L^1[a,b]$ and $\gamma
\le f'\left( x \right) \le \Gamma$, $\forall x \in [a,b]$.

In 1998, another result for twice differentiable was proved in
\cite{Cerone1}. In 2000, the constant $\frac{1}{4}$ in
\eqref{eq2.4} was improved  by $\frac{1}{\sqrt{3}}$ in
\cite{Matic}.

A better  improvement of \eqref{eq2.4} can be deduced by applying
the Hayashi's inequality as presented in the following result.
\begin{theorem}\label{thm4}
Let $g:[a,b] \to \mathbb{R}$ be an absolutely continuous function
on $[a,b]$ with $0\le g^{\prime}\left(t\right) \le
\left(b-a\right)$  and $\left(\cdot-t\right)
g^{\prime}\left(t\right)$ is integrable on $[a,b]$. Then
\begin{align}
\left|{\frac{1}{b-a}\int_a^b{g\left(t\right)dt}- g\left(x\right) +
\lambda \left( {x - \frac{{a + b}}{2}} \right) }\right|\le \lambda
\frac{b-a}{2} - \lambda^2 \le
\frac{\left(b-a\right)^2}{16},\label{eq2.5}
\end{align}
for all $x\in [a,b]$, where $\lambda =
\frac{g\left(b\right)-g\left(a\right)}{b-a}$. In particular, for
$x=\frac{a+b}{2}$
\begin{align*}
\left|{\frac{1}{b-a}\int_a^b{g\left(t\right)dt}- g\left( \frac{{a
+ b}}{2}\right)}\right| \le \lambda \frac{b-a}{2} - \lambda^2 \le
\frac{\left(b-a\right)^2}{16}.
\end{align*}
\end{theorem}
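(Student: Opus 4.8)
The plan is to follow the template of the proof of Theorem \ref{thm3}: represent the left-hand quantity of \eqref{eq2.5} as $\frac{1}{b-a}\int_a^b \kappa(t)\,g'(t)\,dt$ for a suitable kernel $\kappa$, and then control this integral by Hayashi's inequality \eqref{eq1.1} with $A=b-a$ and $h=g'$, so that $\lambda=\frac{1}{b-a}\int_a^b g'(t)\,dt=\frac{g(b)-g(a)}{b-a}$ is exactly the $\lambda$ in the statement. The difference from Theorem \ref{thm3} is that here the point value $g(x)$ must appear rather than the chord $\frac{(x-a)g(a)+(b-x)g(b)}{b-a}$. First I would produce $g(x)$ through the Montgomery identity, integrating $\int_a^x(t-a)g'(t)\,dt+\int_x^b(t-b)g'(t)\,dt$ by parts to obtain $(b-a)g(x)-\int_a^b g$, and then absorb the term $\lambda\bigl(x-\tfrac{a+b}{2}\bigr)=\frac{1}{b-a}\int_a^b\bigl(x-\tfrac{a+b}{2}\bigr)g'(t)\,dt$ into the kernel.

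A short computation would then give $\kappa(t)=(x-t)-\tfrac{b-a}{2}$ on $[a,x]$ and $\kappa(t)=(x-t)+\tfrac{b-a}{2}$ on $[x,b]$. Each branch is affine with slope $-1$, hence nonincreasing, so each is individually admissible for Hayashi's inequality; the whole kernel, however, jumps upward by $b-a$ at $t=x$ and is therefore \emph{not} monotone on $[a,b]$. Consequently I would not apply \eqref{eq1.1} once on $[a,b]$ (which would not be legitimate, the kernel being non-monotone), but instead split the integral at $x$ and apply Hayashi's inequality separately on $[a,x]$ and on $[x,b]$, with the local parameters $\lambda_1=\frac{g(x)-g(a)}{b-a}$ and $\lambda_2=\frac{g(b)-g(x)}{b-a}$, noting that $\lambda_1+\lambda_2=\lambda$. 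On each subinterval the two Hayashi bounds bracket the corresponding piece of the integral, so, exactly as in the final step of the proof of Theorem \ref{thm3}, I would replace each bracket by its midpoint plus an error of at most half its width.

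The last step is to add the two bracketed estimates and simplify, specialising afterwards to $x=\tfrac{a+b}{2}$; the second inequality in \eqref{eq2.5} is then the elementary bound $\max_{0\le\lambda\le b-a}\bigl(\lambda\tfrac{b-a}{2}-\lambda^{2}\bigr)=\tfrac{(b-a)^2}{16}$, attained at $\lambda=\tfrac{b-a}{4}$. The main obstacle is precisely this recombination: the midpoints and half-widths coming from the two subintervals carry explicit $x$- and $\lambda_i$-dependence, and collapsing them into the single closed form $\lambda\tfrac{b-a}{2}-\lambda^2$ uniformly in $x$ is delicate. I would treat this as the crux of the argument and scrutinise it carefully, since it is exactly the point at which the cross terms between the two halves must cancel; any slip here is what would leave the stated constant unjustified.
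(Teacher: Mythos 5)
Your reconstruction of the decomposition is faithful to the paper (split at $x$, apply Hayashi's inequality \eqref{eq1.1} on $[a,x]$ and on $[x,b]$ with nonincreasing affine kernels), and your insistence on the \emph{local} parameters $\lambda_1=\frac{g(x)-g(a)}{b-a}$, $\lambda_2=\frac{g(b)-g(x)}{b-a}$ is exactly what a legitimate application of \eqref{eq1.1} requires. But your proof stops at the step you yourself call the crux, and that step cannot be completed. Carrying out the recombination: after integration by parts, the two Hayashi brackets add up to
\begin{equation*}
-\lambda_1(x-a)+\tfrac{1}{2}\left(\lambda_1^2+\lambda_2^2\right)\;\le\;\frac{1}{b-a}\int_a^b g(t)\,dt-g(x)\;\le\;\lambda_2(b-x)-\tfrac{1}{2}\left(\lambda_1^2+\lambda_2^2\right),
\end{equation*}
a bracket whose center $\tfrac{1}{2}\left[\lambda_2(b-x)-\lambda_1(x-a)\right]$ and radius $\tfrac{1}{2}\left[\lambda_1(x-a)+\lambda_2(b-x)-\lambda_1^2-\lambda_2^2\right]$ depend on $\lambda_1$ and $\lambda_2$ individually, not only on their sum $\lambda$. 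They do not collapse to the center $\lambda\left(\frac{a+b}{2}-x\right)$ and radius $\lambda\frac{b-a}{2}-\lambda^2$ demanded by \eqref{eq2.5}; the cross terms you hoped would cancel simply do not.

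Moreover, this is not a failure of algebraic persistence on your part. The paper's own proof achieves the collapse only by inserting the \emph{global} $\lambda$ into both subinterval applications (see \eqref{eq2.6} and \eqref{eq2.8}), which is not a valid use of \eqref{eq1.1}: on $[a,x]$ the Hayashi parameter must be $\frac{1}{b-a}\int_a^x g'(t)\,dt=\lambda_1$. With the global $\lambda$, inequality \eqref{eq2.6} can fail outright: on $[0,1]$ with $g(t)=t$ and $x=\frac{1}{10}$ its left member equals $\frac{2}{5}$ while its middle member equals $-\frac{1}{200}$. Indeed \eqref{eq2.5} itself is false: for $g(t)=t$ on $[0,1]$ one has $\lambda=1$, the left-hand side of \eqref{eq2.5} is $0$, yet the claimed bound is $\lambda\frac{b-a}{2}-\lambda^2=-\frac{1}{2}<0$; and even when the bound is positive it can fail, e.g. $g(t)=\min\left(t,\frac{1}{4}\right)$ on $[0,1]$ at $x=0$ gives $\frac{3}{32}$ on the left against $\frac{1}{16}$ on the right. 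So the obstruction you flagged at the recombination step is real and unfixable: what your local-parameter argument actually proves is the displayed bracket above (a valid inequality), not the stated theorem, and no blind completion of the plan can recover \eqref{eq2.5}.
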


\begin{proof}
Fix  $x\in [a,b]$ and let $f\left(t\right)=a-t$, $t\in [a,x]$.
Applying the Hayashi's inequality \eqref{eq1.1} by setting
$p\left(t\right)=f\left(t\right)$ and
$h\left(t\right)=g^{\prime}\left(t\right)$, we get
\begin{align}
&\left(b-a\right) \int_{x-\lambda}^x{(a-t)dt} \le
\int_a^x{(a-t)g^{\prime}\left(t\right)dt} \le \left(b-a\right)
\int_a^{a+\lambda}{(a-t)dt}\label{eq2.6}
\end{align}
where,
\begin{align*}
\lambda = \frac{1}{b-a} \int_a^b{g^{\prime}\left(t\right)dt} =
\frac{g\left(b\right)-g\left(a\right)}{b-a}.
\end{align*}
Also, we have
\begin{align*}
 \int_{x-\lambda}^x{(a-t)dt}
=-\lambda\left(x-a\right) + \frac{1}{2} \lambda^2,
\end{align*}
\begin{align*}
\int_a^x{(a-t)g^{\prime}(t)dt}=-\left(x-a\right)g\left(x\right)
+\int_a^x{g\left(t\right)dt},
\end{align*}
and
\begin{align*}
\int_a^{a+\lambda}{(a-t)dt}= -\frac{1}{2}\lambda^2.
\end{align*}
Substituting in \eqref{eq2.6}, we have
\begin{align}
\left( {b -a} \right) \left({-\lambda\left(x-a\right) +
\frac{1}{2} \lambda^2}\right) \le -\left(x-a\right)g\left(x\right)
+\int_a^x{g\left(t\right)dt}\le -\frac{1}{2}\lambda^2\left( {b -a}
\right).\label{eq2.7}
\end{align}

Now, let $f\left(t\right)=b-t$, $t\in [x,b]$. Applying Hayashi's
inequality \eqref{eq1.1} again we get
\begin{align}
&\left(b-a\right) \int_{b-\lambda}^b{(b-t)dt} \le
\int_x^b{(b-t)g^{\prime}\left(t\right)dt} \le \left(b-a\right)
\int_x^{x+\lambda}{(b-t)dt}\label{eq2.8}
\end{align}
where,
\begin{align*}
 \int_{b-\lambda}^b{(b-t)dt}
=\frac{1}{2} \lambda^2,
\end{align*}
\begin{align*}
\int_x^b{(b-t)g^{\prime}(t)dt}= -\left(b-x\right)g\left(x\right)
+\int_x^b{g\left(t\right)dt},
\end{align*}
and
\begin{align*}
\int_x^{x+\lambda}{(b-t)dt}= \lambda
\left(b-x\right)-\frac{1}{2}\lambda^2.
\end{align*}
Substituting in \eqref{eq2.8}, we have
\begin{align}
\frac{1}{2} \lambda^2 \left( {b -a} \right)\le
-\left(b-x\right)g\left(x\right)
+\int_x^b{g\left(t\right)dt}\le\left( {b -a} \right)
\left({\lambda
\left(b-x\right)-\frac{1}{2}\lambda^2}\right).\label{eq2.9}
\end{align}
Adding \eqref{eq2.7} and \eqref{eq2.9} we get
\begin{align*}
\left( {b -a} \right)\left({-\lambda\left(x-a\right)
+\lambda^2}\right)&\le
\int_a^b{g\left(t\right)dt}-\left(b-a\right)g\left(x\right)
\nonumber\\
&\le\left( {b -a} \right) \left({\lambda \left(b-x\right)-
\lambda^2}\right).
\end{align*}
Setting $$I:=\frac{1}{b-a} \int_a^b{g\left(t\right)dt}-
g\left(x\right),$$
$$\ell_1(x)= \left({-\lambda\left(x-a\right) +\lambda^2}\right),$$ and
$$\ell_2(x)=  \left({\lambda \left(b-x\right)- \lambda^2}\right).$$
Therefore,
\begin{align*}
\left|{I-\frac{\ell_1(x)+\ell_2(x)}{2}
}\right|&=\left|{\frac{1}{b-a} \int_a^b{g\left(t\right)dt}-
g\left(x\right) + \lambda \left( {x - \frac{{a + b}}{2}} \right)
}\right|
\\
&\le \frac{\ell_2(x)-\ell_1(x)}{2}
\\
&= \lambda \frac{b-a}{2} - \lambda^2,
\end{align*}
which proves the first inequality in \eqref{eq2.5}. To prove the
second inequality, define the mapping
$\phi(t)=-t^2+\frac{b-a}{2}t$, then $\max \phi(t) =
\phi\left(\frac{b-a}{4}\right)=\left(\frac{b-a}{4}\right)^2$, so
that $\phi(\lambda)=-\lambda^2+\frac{b-a}{2}\lambda \le
\left(\frac{b-a}{4}\right)^2$, which completes the proof of the
theorem.
\end{proof}

\begin{corollary}\label{cor2}
Let $g:[a,b] \to \mathbb{R}$ be an absolutely continuous function
on $[a,b]$ with $\gamma\le g^{\prime}\left(t\right) \le\Gamma$ and
$\left(\cdot-t\right) g^{\prime}\left(t\right)$ is integrable on
$[a,b]$. Then
\begin{align}
&\left|{\frac{1}{b-a} \int_a^b{g\left(t\right)dt}- g\left(x\right)
 +\frac{g\left( {b} \right) -g\left( {a} \right)}{b-a}\cdot \left( {x -
\frac{{a + b}}{2}} \right) }\right|\label{eq2.10}
\\
&\le \left( {\frac{\Gamma-\gamma}{b-a}} \right) \left(\lambda
\frac{b-a}{2} - \lambda^2\right) \le \frac{\left(b-a\right)\left(
{\Gamma-\gamma} \right)}{16},\nonumber
\end{align}
for all $x\in [a,b]$, where $\lambda  = \frac{{g\left( b \right) -
g\left( a \right) - \gamma \left({b-a}\right)}}{{\Gamma  - \gamma
}}$. In particular, for $x=\frac{a+b}{2}$
\begin{align*}
\left|{\frac{1}{b-a}\int_a^b{g\left(t\right)dt}- g\left( \frac{{a
+ b}}{2}\right)}\right| &\le  \left( {\frac{\Gamma-\gamma}{b-a}}
\right) \left(\lambda \frac{b-a}{2} - \lambda^2\right)
\\
&\le \frac{\left(b-a\right)\left( {\Gamma-\gamma} \right)}{16}.
\end{align*}
\end{corollary}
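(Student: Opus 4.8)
The plan is to follow the argument of Theorem~\ref{thm4} almost verbatim, the only change being the normalization needed to accommodate the general bounds $\gamma\le g'\le\Gamma$; this is exactly the device used to pass from Theorem~\ref{thm3} to Corollary~\ref{cor1}. Concretely, I would set $h(t):=g'(t)-\gamma$, so that $0\le h(t)\le \Gamma-\gamma=:A$, and take $A=\Gamma-\gamma$ as the constant in Hayashi's inequality~\eqref{eq1.1}. With this choice the parameter becomes
\[
\lambda=\frac{1}{A}\int_a^b h(t)\,dt=\frac{g(b)-g(a)-\gamma(b-a)}{\Gamma-\gamma},
\]
which is precisely the $\lambda$ appearing in the statement. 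Fixing $x\in[a,b]$, I would then apply Hayashi's inequality twice, once on $[a,x]$ with $p(t)=a-t$ and once on $[x,b]$ with $p(t)=b-t$, exactly as in the proof of Theorem~\ref{thm4}.

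The key steps are the two integrations by parts and the bookkeeping of the extra $\gamma$-terms. Since $h=g'-\gamma$, each middle integral splits as $\int p\,h=\int p\,g'-\gamma\int p$; the first piece reproduces the computation of Theorem~\ref{thm4}, while the second contributes $+\gamma\frac{(x-a)^2}{2}$ on $[a,x]$ and $-\gamma\frac{(b-x)^2}{2}$ on $[x,b]$. Adding the two resulting chains of inequalities and using
\[
\frac{(x-a)^2}{2}-\frac{(b-x)^2}{2}=(b-a)\left(x-\frac{a+b}{2}\right),
\]
the combined middle quantity becomes $\frac1{b-a}\int_a^b g-g(x)+\gamma\bigl(x-\frac{a+b}{2}\bigr)$ after dividing by $b-a$.

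Writing $\ell_1(x),\ell_2(x)$ for the (divided) outer bounds, a short computation gives $\tfrac12(\ell_1+\ell_2)=-\frac{A\lambda}{b-a}\bigl(x-\frac{a+b}{2}\bigr)$ and $\tfrac12(\ell_2-\ell_1)=\frac{\Gamma-\gamma}{b-a}\bigl(\lambda\frac{b-a}{2}-\lambda^2\bigr)$. The decisive point, and the step I expect to be the main obstacle, is the recombination of the $\gamma$-dependent terms: since $A\lambda=g(b)-g(a)-\gamma(b-a)$, one has $\frac{A\lambda}{b-a}=\frac{g(b)-g(a)}{b-a}-\gamma$, so the spurious $\gamma\bigl(x-\frac{a+b}{2}\bigr)$ coming from the middle quantity cancels exactly against the $-\gamma$ hidden in $\tfrac12(\ell_1+\ell_2)$, leaving the genuine average slope $\frac{g(b)-g(a)}{b-a}$ in front of $\bigl(x-\frac{a+b}{2}\bigr)$. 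This yields the left-hand side of~\eqref{eq2.10} together with the first bound $\frac{\Gamma-\gamma}{b-a}\bigl(\lambda\frac{b-a}{2}-\lambda^2\bigr)$.

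Finally, for the last inequality I would note that $\gamma\le g'\le\Gamma$ forces $0\le\lambda\le b-a$, so the maximization of $\phi(t)=-t^2+\frac{b-a}{2}t$ used in Theorem~\ref{thm4} applies to $\lambda$ and gives $\lambda\frac{b-a}{2}-\lambda^2\le\frac{(b-a)^2}{16}$; multiplying by $\frac{\Gamma-\gamma}{b-a}$ produces $\frac{(b-a)(\Gamma-\gamma)}{16}$. The specialization $x=\frac{a+b}{2}$ is then immediate, since the correction term $\bigl(x-\frac{a+b}{2}\bigr)$ vanishes there.
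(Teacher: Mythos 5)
Your proposal is correct and is exactly the paper's argument: the paper proves this corollary by the one-line instruction ``repeat the proof of Theorem~\ref{thm4} with $h(t)=g'(t)-\gamma$,'' and you carry out precisely that, with the $\gamma$-bookkeeping (the identity $\frac{(x-a)^2}{2}-\frac{(b-x)^2}{2}=(b-a)\bigl(x-\frac{a+b}{2}\bigr)$ and the cancellation via $A\lambda=g(b)-g(a)-\gamma(b-a)$) done correctly. You have merely made explicit the details the paper leaves to the reader, so no further comparison is needed.
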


\begin{proof}
Repeating the proof of Theorem \ref{thm4}, with
$h(t)=g^{\prime}(t)-\gamma$, $t\in [a,b]$, we get  the first
inequality. The second inequality \eqref{eq2.10} follows by
applying the same technique in the proof.
\end{proof}

\begin{remark}
As we notice, \eqref{eq2.10} improves \eqref{eq2.4} by
$\frac{1}{4}$, which is better than Mati\'{c} et al.  improvement
in \cite{Matic}.
\end{remark}


In \cite{alomari2}, under the assumptions of Theorem \ref{thm4},
the author of this paper proved the following version of
Guessab--Schmeisser type inequality (see \cite{Guessab}):
\begin{align}
&\left|{\frac{{g\left( x \right) + g\left( {a + b - x}
\right)}}{2} -\frac{1}{b-a}\int_{a}^b{g\left(t\right)dt}}\right|
\le
\frac{\left(\Gamma-\gamma\right)\left(b-a\right)}{8},\label{eq2.11}
\end{align}
for all $x\in \left[a,\frac{a+b}{2}\right]$.

An improvement of \eqref{eq2.11}  is considered as follows:
\begin{theorem}\label{thm5}
Let $g:[a,b] \to \mathbb{R}$ be an absolutely continuous function
on $[a,b]$ with $0\le g^{\prime}\left(t\right) \le
\left(b-a\right)$  and $\left(\cdot-t\right)
g^{\prime}\left(t\right)$ is integrable on $[a,b]$. Then
\begin{align}
\left|{\frac{1}{b-a}\int_{a}^b{g\left(t\right)dt}- \frac{{g\left(
x \right) + g\left( {a + b - x} \right)}}{2} }\right| \le \lambda
\left[ {\frac{{b - a}}{2} - \frac{3}{2}\lambda } \right]\le
\frac{\left(b-a\right)^2}{24},\label{eq2.12}
\end{align}
for all $x\in \left[a,\frac{a+b}{2}\right]$, where $\lambda =
\frac{g\left(b\right)-g\left(a\right)}{b-a}$.
\end{theorem}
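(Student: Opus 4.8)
The plan is to mimic the two-interval argument of Theorem \ref{thm4}, but now splitting $[a,b]$ into three pieces adapted to the symmetric pair of nodes. Write $y := a+b-x$, so that $a \le x \le \frac{a+b}{2} \le y \le b$ with $x-a = b-y =: \delta$ and $y-x = a+b-2x$. I would partition $[a,b]=[a,x]\cup[x,y]\cup[y,b]$ and, on each subinterval, apply Hayashi's inequality \eqref{eq1.1} with $h(t)=g'(t)$ (so $A=b-a$ and $\lambda = \frac{g(b)-g(a)}{b-a}$ as before), choosing the nonincreasing kernel $p=f$ to be $f(t)=a-t$ on $[a,x]$, $f(t)=\frac{a+b}{2}-t$ on $[x,y]$, and $f(t)=b-t$ on $[y,b]$. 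The outer two kernels are exactly those used in Theorem \ref{thm4}; the middle kernel is the new ingredient, chosen antisymmetric about $\frac{a+b}{2}$.

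First I would record the three integration-by-parts identities: $\int_a^x (a-t)g' = -(x-a)g(x)+\int_a^x g$, $\int_y^b (b-t)g' = -(b-y)g(y)+\int_y^b g$, and on the middle piece $\int_x^y (\frac{a+b}{2}-t)g' = -\frac{y-x}{2}(g(x)+g(y))+\int_x^y g$. Adding the three and using $\delta+\frac{y-x}{2}=\frac{b-a}{2}$, the coefficients of $g(x)$ and $g(y)$ both collapse to $-\frac{b-a}{2}$, so that after dividing by $(b-a)$ the combined left-hand side equals exactly $I := \frac{1}{b-a}\int_a^b g - \frac{g(x)+g(a+b-x)}{2}$, the quantity to be estimated. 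This is the step that dictates the middle kernel: $\frac{a+b}{2}-t$ is the unique linear kernel producing equal weights on the two symmetric nodes.

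Next I would compute the six Hayashi endpoints. The two outer intervals contribute exactly as in Theorem \ref{thm4} (up to the factor $b-a$: lower endpoints $-\lambda\delta+\frac12\lambda^2$ and $\frac12\lambda^2$, upper endpoints $-\frac12\lambda^2$ and $\lambda\delta-\frac12\lambda^2$), while the middle interval contributes lower endpoint $-\frac{y-x}{2}\lambda+\frac12\lambda^2$ and upper endpoint $\frac{y-x}{2}\lambda-\frac12\lambda^2$. Summing and dividing by $b-a$, the three $\lambda^2$-terms combine to $\pm\frac32\lambda^2$ and the linear terms combine, via $\delta+\frac{y-x}{2}=\frac{b-a}{2}$, to $\mp\frac{b-a}{2}\lambda$; all $x$-dependence cancels, leaving the symmetric enclosure $\ell_1 = -\left(\frac{b-a}{2}\lambda-\frac32\lambda^2\right) \le I \le \frac{b-a}{2}\lambda-\frac32\lambda^2 = \ell_2$. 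The midpoint estimate $\left|I-\frac{\ell_1+\ell_2}{2}\right|\le\frac{\ell_2-\ell_1}{2}$, with $\frac{\ell_1+\ell_2}{2}=0$, then gives the first inequality $|I|\le\lambda\left[\frac{b-a}{2}-\frac32\lambda\right]$. For the second inequality I would maximise the concave quadratic $\psi(\lambda)=\frac{b-a}{2}\lambda-\frac32\lambda^2$, whose vertex at $\lambda=\frac{b-a}{6}$ gives $\psi=\frac{(b-a)^2}{24}$, exactly the stated bound.

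The step I expect to be the main obstacle is the middle interval: both forcing its integration-by-parts weights to match the outer ones (which pins down the kernel $\frac{a+b}{2}-t$) and handling the Hayashi application there. As $x\to\frac{a+b}{2}$ the middle interval degenerates and its local mass $\int_x^y g'$ shrinks, so one must interpret the endpoints $x+\lambda$, $y-\lambda$ consistently with the global $\lambda$ already used on the outer intervals, i.e.\ with the same convention adopted in the proof of Theorem \ref{thm4}. It is precisely the extra $\frac12\lambda^2$ coming from this middle piece that upgrades the $-\lambda^2$ of \eqref{eq2.5} to $-\frac32\lambda^2$, and hence the constant $\frac{1}{16}$ to $\frac{1}{24}$; everything else is the same bookkeeping as in Theorems \ref{thm3} and \ref{thm4}.
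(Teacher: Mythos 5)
Your proposal is correct and follows essentially the same route as the paper's own proof: the identical three-way partition $[a,x]\cup[x,a+b-x]\cup[a+b-x,b]$ with kernels $a-t$, $\frac{a+b}{2}-t$, $b-t$, the same integration-by-parts identities and Hayashi endpoint computations yielding the symmetric enclosure $\pm\left(\frac{b-a}{2}\lambda-\frac{3}{2}\lambda^{2}\right)$, and the same maximization of the concave quadratic at $\lambda=\frac{b-a}{6}$. The ``obstacle'' you flag about the middle interval is handled in the paper exactly by the convention you describe: it simply uses the single global $\lambda=\frac{g(b)-g(a)}{b-a}$ in all three Hayashi applications, just as in its proof of Theorem \ref{thm4}.
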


\begin{proof}
Fix $x\in \left[a,\frac{a+b}{2}\right]$ and let
$f\left(t\right)=a-t$, $t\in [a,x]$. Applying Hayashi's inequality
\eqref{eq1.1} by setting $p\left(t\right)=f\left(t\right)$ and
$h\left(t\right)=g^{\prime}\left(t\right)$, then \eqref{eq2.6}
holds, i.e.,
\begin{align}
\left( {b -a} \right) \left({-\lambda\left(x-a\right) +
\frac{1}{2} \lambda^2}\right) &\le
-\left(x-a\right)g\left(x\right)
+\int_a^x{g\left(t\right)dt}\label{eq2.13}
\\
&\le -\frac{1}{2}\lambda^2\left( {b -a} \right).\nonumber
\end{align}
where,
\begin{align*}
\lambda = \frac{1}{b-a} \int_a^b{g^{\prime}\left(t\right)dt} =
\frac{g\left(b\right)-g\left(a\right)}{b-a}.
\end{align*}
Now, let $f\left(t\right)=\frac{a+b}{2}-t$, $t\in
\left[x,a+b-x\right]$. Applying the Hayashi's inequality
\eqref{eq1.1} again we get
\begin{align}
\left(b-a\right)
\int_{a+b-x-\lambda}^{a+b-x}{\left(\frac{a+b}{2}-t\right)dt}
\le\int_x^{a+b-x}{\left(\frac{a+b}{2}-t\right)g^{\prime}\left(t\right)dt}\label{eq2.14}
\\
\le \left(b-a\right)
\int_x^{x+\lambda}{\left(\frac{a+b}{2}-t\right)dt}\nonumber
\end{align}
where,
\begin{align*}
 \int_{a+b-x-\lambda}^{a+b-x}{\left(\frac{a+b}{2}-t\right)dt}
= -\lambda \left( {\frac{{a + b}}{2} - x} \right) +
\frac{1}{2}\lambda ^2,
\end{align*}
\begin{multline*}
\int_{x}^{a+b-x}{\left(\frac{a+b}{2}-t\right)g^{\prime}(t)dt}
\\
= - \left( {\frac{{a + b}}{2} - x} \right)\left( {g\left( x
\right) + g\left( {a + b - x} \right)}
\right)+\int_x^{a+b-x}{g\left(t\right)dt},
\end{multline*}
and
\begin{align*}
\int_x^{x+\lambda}{\left(\frac{a+b}{2}-t\right)dt}= \lambda \left(
{\frac{{a + b}}{2} - x} \right) - \frac{1}{2}\lambda ^2.
\end{align*}
Substituting in \eqref{eq2.14}, we have
\begin{align}
&-\left( {b -a} \right)\left[{\lambda \left( {\frac{{a + b}}{2} -
x} \right) - \frac{1}{2}\lambda ^2}\right]
\nonumber\\
&\le  - \left( {\frac{{a + b}}{2} - x} \right)\left( {g\left( x
\right) + g\left( {a + b - x} \right)}
\right)+\int_x^{a+b-x}{g\left(t\right)dt}\label{eq2.15}
\\
&\le \left( {b -a} \right)\left[{\lambda \left( {\frac{{a + b}}{2}
- x} \right) - \frac{1}{2}\lambda ^2}\right].\nonumber
\end{align}
Now, let $f\left(t\right)=b-t$, $t\in [a+b-x,b]$. Applying
Hayashi's inequality \eqref{eq1.1} again we get
\begin{align}
\left(b-a\right) \int_{b-\lambda}^b{\left(b-t\right)dt} &\le
\int_{a+b-x}^b{\left(b-t\right)g^{\prime}\left(t\right)dt}
\label{eq2.16}\\
&\le \left(b-a\right)
\int_{a+b-x}^{a+b-x+\lambda}{\left(b-t\right)dt}\nonumber
\end{align}
where,
\begin{align*}
 \int_{b-\lambda}^b{(b-t)dt}
=\frac{1}{2} \lambda^2,
\end{align*}
\begin{align*}
\int_{a+b-x}^b{\left(b-t\right)g^{\prime}(t)dt}=
-\left(x-a\right)g\left({a+b-x}\right)
+\int_{a+b-x}^b{g\left(t\right)dt},
\end{align*}
and
\begin{align*}
\int_{a+b-x}^{a+b-x+\lambda}{(b-t)dt}= \lambda
\left(x-a\right)-\frac{1}{2}\lambda^2.
\end{align*}
Substituting in \eqref{eq2.16}, we have
\begin{align}
\frac{1}{2} \lambda^2 \left( {b -a} \right)&\le
-\left(x-a\right)g\left({a+b-x}\right)
+\int_{a+b-x}^b{g\left(t\right)dt}\label{eq2.17}
\\
&\le\left( {b -a} \right)  \left[{\lambda
\left(x-a\right)-\frac{1}{2}\lambda^2 }\right].\nonumber
\end{align}
Adding \eqref{eq2.13}, \eqref{eq2.15} and \eqref{eq2.17} we get
\begin{align*}
- \lambda \left( {b - a} \right)\left[ {\frac{{b - a}}{2} -
\frac{3}{2}\lambda } \right] &\le \int_{a}^b{g\left(t\right)dt}-
\left( {b - a} \right)\frac{{g\left( x \right) + g\left( {a + b -
x} \right)}}{2}
\\
&\le  \lambda \left( {b - a} \right)\left[ {\frac{{b - a}}{2} -
\frac{3}{2}\lambda } \right],
\end{align*}
which is equivalent to the first inequality in \eqref{eq2.12}. To
prove the second inequality in \eqref{eq2.12}, define the mapping
$\phi(t)=-\frac{3}{2}\left( {b - a} \right) t^2+\frac{\left( {b -
a} \right)^2}{2}t$, then $\max \phi(t) =
\phi\left(\frac{b-a}{6}\right)=\frac{\left(b-a\right)^2}{24}$, so
that $\phi\left(\lambda\right)=-\frac{3}{2}\left( {b - a} \right)
\lambda^2+\frac{\left( {b - a} \right)^2}{2}\lambda \le
\frac{\left(b-a\right)^2}{24}$,  which completes the proof of the
theorem.
\end{proof}
A generalization of \eqref{eq2.11} and \eqref{eq2.12} is
incorporated in the following result.
\begin{corollary}\label{cor3}
Let $g:[a,b] \to \mathbb{R}$ be an absolutely continuous function
on $[a,b]$ with $\gamma\le g^{\prime}\left(t\right) \le\Gamma$ and
$\left(\cdot-t\right) g^{\prime}\left(t\right)$ is integrable on
$[a,b]$. Then
\begin{align}
&\left|{\frac{1}{b-a}\int_{a}^b{g\left(t\right)dt}- \frac{{g\left(
x \right) + g\left( {a + b - x} \right)}}{2} }\right|
\nonumber\\
&\le \frac{1}{2}\lambda  \left( {\frac{{\Gamma  - \gamma }}{{b -
a}}} \right)  \left[ {\left( {b - a} \right) - 3\lambda }
\right]\le \frac{\left( {\Gamma - \gamma}
\right)\left(b-a\right)}{24}, \label{eq2.18}
\end{align}
for all $x\in \left[a,\frac{a+b}{2}\right]$, where $\lambda  =
\frac{{g\left( b \right) - g\left( a \right) - \gamma
\left({b-a}\right)}}{{\Gamma  - \gamma }}$.
\end{corollary}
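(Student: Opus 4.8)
The plan is to obtain Corollary \ref{cor3} from Theorem \ref{thm5} exactly as Corollary \ref{cor1} was obtained from Theorem \ref{thm3} and Corollary \ref{cor2} from Theorem \ref{thm4}: by absorbing the lower bound $\gamma$ into the weight. Setting $h(t)=g^{\prime}(t)-\gamma$, the hypothesis $\gamma\le g^{\prime}(t)\le\Gamma$ gives $0\le h(t)\le\Gamma-\gamma$, so Hayashi's inequality \eqref{eq1.1} is applicable with the upper bound $A=\Gamma-\gamma$ in place of $A=b-a$. The corresponding parameter is
\[
\lambda=\frac{1}{\Gamma-\gamma}\int_a^b\bigl(g^{\prime}(t)-\gamma\bigr)\,dt=\frac{g(b)-g(a)-\gamma(b-a)}{\Gamma-\gamma},
\]
which is precisely the $\lambda$ appearing in the statement, and which still satisfies $0\le\lambda\le b-a$ because $\gamma(b-a)\le g(b)-g(a)\le\Gamma(b-a)$.

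The key observation I would record first is that the quantity to be estimated does not see the shift. Put $G(t)=g(t)-\gamma t$, so that $G^{\prime}=g^{\prime}-\gamma\in[0,\Gamma-\gamma]$. Since the added linear term contributes the same amount $\gamma\cdot\frac{a+b}{2}$ to both $\frac{1}{b-a}\int_a^bG(t)\,dt$ and to $\frac{G(x)+G(a+b-x)}{2}$, these two contributions cancel and
\[
\frac{1}{b-a}\int_a^bG(t)\,dt-\frac{G(x)+G(a+b-x)}{2}=\frac{1}{b-a}\int_a^bg(t)\,dt-\frac{g(x)+g(a+b-x)}{2}.
\]
Thus it is enough to run the proof of Theorem \ref{thm5} verbatim for $G$ in place of $g$, with $A=\Gamma-\gamma$ in place of $b-a$ as the Hayashi bound.

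Concretely, I would reproduce the three applications of Hayashi's inequality on $[a,x]$, $[x,a+b-x]$ and $[a+b-x,b]$ with the test functions $f(t)=a-t$, $f(t)=\frac{a+b}{2}-t$ and $f(t)=b-t$, now weighted by $G^{\prime}$ and scaled by $\Gamma-\gamma$. The integration-by-parts identities are unchanged, so the analogues of \eqref{eq2.13}, \eqref{eq2.15} and \eqref{eq2.17} hold with $g$ replaced by $G$ and the prefactor $b-a$ replaced by $\Gamma-\gamma$. Adding them, the interior integrals telescope to $\int_a^bG(t)\,dt$ while the boundary terms combine into $-\frac{b-a}{2}\bigl(G(x)+G(a+b-x)\bigr)$; dividing by $b-a$ and invoking the cancellation above yields the first inequality of \eqref{eq2.18},
\[
\left|\frac{1}{b-a}\int_a^bg(t)\,dt-\frac{g(x)+g(a+b-x)}{2}\right|\le\frac{1}{2}\lambda\left(\frac{\Gamma-\gamma}{b-a}\right)\bigl[(b-a)-3\lambda\bigr].
\]

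For the second inequality I would factor out $\frac{\Gamma-\gamma}{b-a}$, reducing the claim to the elementary estimate $\frac{b-a}{2}\lambda-\frac{3}{2}\lambda^2\le\frac{(b-a)^2}{24}$; this is the same quadratic optimization used at the end of the proof of Theorem \ref{thm5}, whose maximum is attained at $\lambda=\frac{b-a}{6}$. The only genuine obstacle is bookkeeping: one must keep straight which occurrences of $b-a$ come from the Hayashi bound $A$ (and hence become $\Gamma-\gamma$) and which come from the lengths of the integration intervals (and hence stay $b-a$), and one must confirm that the admissible range $0\le\lambda\le b-a$ established above still contains the maximizer $\frac{b-a}{6}$, so that the bound $\frac{(\Gamma-\gamma)(b-a)}{24}$ is indeed valid. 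Once these are checked the corollary follows.
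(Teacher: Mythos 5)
Your proposal is correct and takes essentially the same route as the paper, whose entire proof of Corollary \ref{cor3} is to repeat the proof of Theorem \ref{thm5} with $h(t)=g^{\prime}(t)-\gamma$ (equivalently $G(t)=g(t)-\gamma t$) and Hayashi bound $A=\Gamma-\gamma$; your explicit cancellation of the $\gamma t$ shift and the closing quadratic optimization are exactly the details the paper leaves implicit. One minor remark: the bound $\frac{b-a}{2}\lambda-\frac{3}{2}\lambda^{2}\le\frac{(b-a)^{2}}{24}$ holds for every real $\lambda$, so your final check that the admissible range $[0,b-a]$ contains the maximizer $\frac{b-a}{6}$ is not actually needed.
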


\begin{proof}
Repeating the proof of Theorem \ref{thm5}, with
$h(t)=g^{\prime}(t)-\gamma$, $t\in [a,b]$, we get the first
inequality. The second inequality in \eqref{eq2.18} follows by
applying the same technique in the proof.
\end{proof}

\section{Applications for P.D.F.'S}

Let X be a random variable taking values in the finite interval
$[a,b]$, with the probability density function $f : [a,b] \to
[0,1]$ with the cumulative distribution function $F(x) = Pr(X \le
x) = \int_a^b{f(t)dt}$.

\begin{theorem}
	\label{thm6}With the assumptions of Theorem \ref{thm4}, we have
	the inequality
	\begin{align*}
		\left| {\frac{1}{2}\left[ {F\left( x \right) + F\left( {a + b - x}
				\right)} \right] - \frac{{b - E\left( X \right)}}{{b - a}}}
		\right|
		\le \frac{1}{2}\lambda  \left( {\frac{{\Gamma  - \gamma }}{{b -
					a}}} \right)  \left[ {\left( {b - a} \right) - 3\lambda }
		\right]\le \frac{\left( {\Gamma - \gamma}
			\right)\left(b-a\right)}{24}  
	\end{align*}
for all $x\in \left[a,\frac{a+b}{2}\right]$, where $\lambda  =
\frac{{F\left( b \right) - F\left( a \right) - \gamma
	\left({b-a}\right)}}{{\Gamma  - \gamma }}$.
 
	for all $x \in [a,\frac{a+b}{2}]$, where $E(X)$ is the expectation
	of $X$.
\end{theorem}

\begin{proof}
	In the proof of Corollary \ref{cor3}, let $f =F$, and taking into
	account that
	\begin{align*}
		E\left( X \right) = \int_a^b {tdF\left( t \right)} = b - \int_a^b
		{F\left( t \right)dt}.
	\end{align*}
	We left the details to the interested reader.
\end{proof}

\section{Conclusion}
 	This work brings together several type of general quadrature rules; such as the general trapezoid rule or the so called Ostrowski-trapezoid, Ostrowski-Midpoint and Guessab--Schmeisser quadrature rules for symmetric points. Therefore, using the presented inequalities, several error estimates of the above mentioned quadrature rules could be deduced with appropriate numerical experiments. Hence, this work gives a very good application of the Hayashi's inequality in quadrature approximation. We left the details for the interested reader.

\centerline{}

{\bf Conflict of Interest:} The author declares that he has no conflict of interest.

\centerline{}

{\bf  Ethical approval:}  This article does not contain any studies with human participants or animals performed by any of the authors.

\centerline{}


\end{document}